\newcommand{\F}{\mathbb F}
\newcommand{\C}{\mathbb C}
\newcommand{\R}{\mathbb R}
\newcommand{\arr}[1]{\xymatrix{{}\ar@{|->}[r]^{#1}&{}}}
\renewcommand{\a}[1]{\xymatrix{{}\ar@{->}[r]^{#1}&{}}}
\renewcommand{\ge}{\geqslant}
\newtheorem{theorem}{Theorem}
\newtheorem{lemma}[theorem]{Lemma}
\begin{document}
\title{Topological classification of sesquilinear forms: reduction to the nonsingular case}
\author{\emph{Carlos M. da Fonseca}\\
Department of Mathematics, Kuwait University\\ Safat 13060, Kuwait,
carlos@sci.kuniv.edu.kw
\and
\emph{Tetiana Rybalkina\qquad Vladimir V.
Sergeichuk}\\{Institute of Mathematics,
Tereshchenkivska 3, Kiev, Ukraine}\\
rybalkina\_t@ukr.net\qquad
sergeich@imath.kiev.ua}
\date{}

 \maketitle
\begin{abstract}
Two sesquilinear forms
$\Phi:\C^m\times\C^m\to \C$ and
$\Psi:\C^n\times\C^n\to \C$
are called topologically equivalent if there exists a homeomorphism $\varphi :\C^m\to \C^n$ (i.e., a continuous bijection whose inverse is also a continuous bijection) such that
$\Phi(x,y)=\Psi(\varphi (x),\varphi (y))$ for all $x,y\in \C^m$.
R.A. Horn and V.V. Sergeichuk in 2006 constructed a regularizing decomposition of a square complex matrix $A$; that is, a direct sum
$SAS^*=R\oplus
J_{n_1}\oplus\dots\oplus J_{n_p}$, in which $S$ and $R$ are nonsingular and each $J_{n_i}$ is
the $n_i$-by-$n_i$ singular Jordan
block. In this paper, we prove that $\Phi$ and $\Psi$ are topologically equivalent if and only if the regularizing decompositions of their matrices coincide up to permutation of the singular summands $J_{n_i}$ and replacement of $R\in\C^{r\times r}$ by a nonsingular matrix $R'\in\C^{r\times r}$ such that $R$ and $R'$ are the matrices of topologically equivalent forms $\C^r\times\C^r\to \C$. Analogous
results  for bilinear forms over $\C$ and over $\R$ are also
obtained.

\emph{Keywords:} Topological equivalence; Regularizing decomposition; Bilinear and sesquilinear forms

\emph{MSC:} 15A21; 15B33; 37J40
\end{abstract}

\section{Introduction}

In $1974$, Gabriel \cite{gab} reduced the problem of classifying
bilinear forms over an arbitrary field $\F$ to the problem of
classifying nonsingular bilinear forms. In this paper, we take an analogous step towards the topological classification of
bilinear and sesquilinear forms, reducing it to the nonsingular case.

Unlike the problem of topological classification of forms, which has not yet been considered, the
problem of topological classification of linear operators has been
thoroughly studied. Kuiper and Robbin \cite{Kuip-Robb, Robb} gave a
criterion for topological similarity of real matrices without
eigenvalues that are roots of $1$. Their result was extended to
complex matrices in \cite{bud1}. The problem of topological
similarity of matrices with an eigenvalue that is a root of $1$ was
also considered by these authors \cite{Kuip-Robb, Robb} as well as by Cappell and Shaneson
\cite{Capp-conexamp, Capp-2th-nas-n<=6,Capp-big-n<6,
Cap+sha,Cap+ste}, and by Hambleton and Pedersen \cite{h-p,h-p1}. The
problem of topological classification was studied for orthogonal
operators \cite{Pardon}, for affine operators
\cite{Blanc,bud,bud1,Ephr}, for M\"obius transformations
\cite{ryb+ser_meb}, for chains of linear mappings \cite{ryb+ser},
for matrix pencils \cite{f-r-s}, for oriented cycles of linear
mappings \cite{ryb+ser1}, and for quiver representations \cite{lop}.

A pair $(U,\Phi)$ consisting of a vector space $U$ and a bilinear
form $\Phi$ is called by Gabriel \cite{gab} a \emph{bilinear space}.
Similarly, we call a pair $(U,\Phi)$ a \emph{sesquilinear space} if
$\Phi$ is a sesquilinear form. A pair $(U,\Phi)$ is \emph{singular} or
\emph{nonsingular} if $\Phi$ is so. Two spaces $(U,\Phi)$ are
$(V,\Psi)$ are \emph{isomorphic} if there exists a linear bijection
$\varphi :U\to V$ such that
\begin{equation}\label{fdr}
\Phi(x,y)=\Psi(\varphi (x),\varphi (y))\, ,\qquad\text{for all
}x,y\in U.
\end{equation}
The \emph{direct sum} of pairs is the pair
\[
(U,\Phi)\oplus(V,\Psi):=(U\oplus V,\Phi\oplus\Psi).
\]
A pair is \emph{indecomposable} if it is not isomorphic to a direct sum of pairs with vector spaces of smaller sizes.

Let vector spaces $U$ and $V$ be also topological spaces. For
example, they are subspaces of $\C^m:=\C\oplus\dots\oplus\C$ ($m$
summands) with a usual topology. We say that $(U,\Phi)$ and
$(V,\Psi)$ are \emph{topologically equivalent} if there exists a
homeomorphism $\varphi :U\to V$, i.e., a continuous bijection whose
inverse is also a continuous bijection, such that \eqref{fdr} holds.

The main result of the paper is
the following theorem, which is
proved in Section \ref{ssw}.

\begin{theorem}\label{jus}
Let $\F$ be $\C$ or $\R$. Let $(\F^m,\Phi)$ and $(\F^n,\Psi)$ be two
bilinear or two sesquilinear spaces that are topologically
equivalent. Suppose that
\begin{align*}
(\F^m,\Phi)&=(U_0,\Phi_0)\oplus(U_1,\Phi_1)
\oplus\dots\oplus(U_r,\Phi_r)\\
(\F^n,\Psi)&=(V_0,\Psi_0)\oplus(V_1,\Psi_1)
\oplus\dots\oplus(V_s,\Psi_s),
\end{align*}
where $(U_0,\Phi_0)$ and $(V_0,\Psi_0)$ are nonsingular and the
other summands are indecomposable and singular. Then $m=n$, $r=s$,
$(U_0,\Phi_0)$ and $(V_0,\Psi_0)$ are topologically equivalent, and,
after a suitable reindexing, each $(U_i,\Phi_i)$ is isomorphic to
$(V_i,\Psi_i)$.
\end{theorem}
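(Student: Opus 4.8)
The plan is to treat invariance of domain as the only genuinely topological input and to extract everything else from the algebra of the radicals, which a homeomorphism must respect. Two elementary facts drive the whole argument. First, a homeomorphism between a subset of $\F^m$ and a subset of $\F^n$ forces equal real dimension; applied to $\varphi$ itself this gives $m=n$, and applied to linear pieces it turns ``$\varphi$ carries $X$ onto $Y$'' into ``$\dim X=\dim Y$''. Second, writing $L_\Phi=\{x:\Phi(x,y)=0\ \forall y\}$ for the left radical and $R_\Phi$ for the right radical, the relation \eqref{fdr} together with surjectivity of $\varphi$ shows $x\in L_\Phi\Leftrightarrow\varphi(x)\in L_\Psi$, so $\varphi(L_\Phi)=L_\Psi$ and likewise $\varphi(R_\Phi)=R_\Psi$. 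A sharper form is the real lever: if $\ell\in L_\Phi$ then $\Phi(x+\ell,y)=\Phi(x,y)$ for all $y$, hence $\Psi(\varphi(x+\ell)-\varphi(x),\varphi(y))=0$ for all $y$, so $\varphi(x+\ell)-\varphi(x)\in L_\Psi$. Thus $\varphi$ sends $L_\Phi$-cosets into $L_\Psi$-cosets and descends to a continuous bijection $\bar\varphi\colon\F^m/L_\Phi\to\F^n/L_\Psi$, which is a homeomorphism by invariance of domain; the same holds on the right.

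From these facts $r=s$ is immediate: each indecomposable singular summand contributes exactly one dimension to $L_\Phi$ while the nonsingular summand contributes none, so $\dim_\F L_\Phi=r$, and $\varphi(L_\Phi)=L_\Psi$ gives $r=s$. To match the summands themselves I would manufacture $\varphi$-invariant subspaces by the operations $W\mapsto W^{\perp_{\mathrm l}}$, $W\mapsto W^{\perp_{\mathrm r}}$ (left and right orthogonal complement) and $\cap$, starting from the whole space. An induction on the length of such a word $S$, using only \eqref{fdr} and surjectivity as above, gives $\varphi(S(\Phi))=S(\Psi)$; since each $S(\Phi)$ is a genuine subspace, invariance of domain yields $\dim S(\Phi)=\dim S(\Psi)$. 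As the summands are mutually orthogonal, every complement splits along the decomposition, so $\dim S(\Phi)=\dim S(\Phi_0)+\sum_i\dim S(\Phi_i)$ with the nonsingular term equal to $0$ or $\dim U_0$. Restricting to words that kill the nonsingular block (for instance intersecting with $L_\Phi$) converts the equalities into identities among the singular blocks alone; the combinatorial heart is to exhibit enough words to separate the multiset of block sizes — already $\dim(L_\Phi\cap R_\Phi)$ counts the one-dimensional blocks — after which the $(U_i,\Phi_i)$ and $(V_i,\Psi_i)$ coincide up to reindexing, giving the required isomorphisms.

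For the topological equivalence of the nonsingular parts I would induct, peeling off one singular summand at a time while leaving the nonsingular block untouched, the coset descent being what makes this possible. Quotienting by the $\varphi$-invariant space $L_\Phi\cap R_\Phi$ removes all one-dimensional singular blocks; since this space lies in both radicals the form descends too, so $\bar\varphi$ is again a form-intertwining homeomorphism with strictly fewer singular summands and the same $(U_0,\Phi_0)$, and for a block of size $\ge 2$ one performs the analogous reduction prescribed by the Horn--Sergeichuk regularizing algorithm. When no singular summand survives, the assertion to be proved is exactly the hypothesis that the two nonsingular forms are topologically equivalent. I expect this to be the main obstacle: disentangling the regular block from the singular ones through a merely continuous, hence nonlinear, $\varphi$. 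The coset-descent property is the right tool, but arranging the reduction so that the regular part is transported unchanged up to topological equivalence — together with the separating-words lemma of the previous paragraph — is where the genuine work lies.
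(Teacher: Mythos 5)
Your opening moves coincide with the paper's: invariance of domain gives $m=n$; the left radical satisfies $\varphi(L_\Phi)=L_\Psi$, which (since each indecomposable singular summand contributes exactly one dimension to the left radical and the nonsingular summand none) gives $r=s$; and your ``coset descent'' $\varphi(x+\ell)-\varphi(x)\in L_\Psi$ is precisely the paper's key identity \eqref{lke!}. But the two substantive conclusions are left as acknowledged placeholders, and that is a genuine gap rather than a routine omission. First, for matching the singular summands you propose to separate the multiset of block sizes by dimensions of subspaces built from words in $\perp_{\mathrm l}$, $\perp_{\mathrm r}$ and $\cap$, but you exhibit only one such invariant ($\dim(L_\Phi\cap R_\Phi)$ counts the $J_1$'s) and defer the rest as ``the combinatorial heart'' without producing it. The paper never needs a separating family: it shows that the two numbers $m_1=\dim L$ and $m_2=\dim U-\dim K$ produced by one pass of the Horn--Sergeichuk regularization are topological invariants, iterates, and then reads off the multiplicities $m_i-m_{i+1}$ of the blocks $J_i$ from Theorem \ref{t3}(b).

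Second and more seriously, the step you single out as ``where the genuine work lies'' --- transporting the regular part through a merely continuous $\varphi$ --- is the theorem's actual content, and your sketch stalls exactly there. Note also that $\Phi$ does \emph{not} descend to $\F^m/L_\Phi$ (only the left argument may be shifted by $L_\Phi$), so your quotient map $\bar\varphi$ carries no form; quotienting by $L_\Phi\cap R_\Phi$ fixes this but only strips the $J_1$ blocks. The paper's resolution is to restrict first to $K=\{x\,|\,\Phi(x,L)=0\}$, inside which $L$ lies in both radicals of the restricted form, and then to prove (Lemma \ref{jje} together with \eqref{dlt}) that the composite $\psi=\pi'\circ\varphi\circ\iota: L^{\bot}_K\to L^{\prime\bot}_{K'}$ is a homeomorphism intertwining the restricted forms: the coset identity makes $\psi$ and $\psi'$ mutually inverse, and the vanishing of $\Phi'$ on $L'\times U'$ and on $K'\times L'$ kills the cross terms $\Phi'(\psi(x),l'')$, $\Phi'(l',\psi(y))$, $\Phi'(l',l'')$. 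Iterating this single lemma down to the nonsingular core simultaneously yields the topological equivalence of the regular parts and the equality of all the $m_i$; supplying it is what would turn your outline into a proof.
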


The equality $m=n$ in Theorem \ref{jus}
holds due to
the following statement:
\begin{equation}\label{svt}
\text{if $\F\in\{\C,\R\}$ and $\F^m$ is homeomorphic to $\F^n$, then
$m=n$}
\end{equation}
(see \cite[Corollary 19.10]{Bred} or \cite[Section 11]{McCl}).

Let us reformulate Theorem \ref{jus} in a matrix form. Each bilinear
or sesquilinear space $(\F^m,\Phi)$ can be given by the pair
$(\F^m,A)$, in which $A$ is the matrix of $\Phi$ in the standard
basis. Changing the basis, we can reduce $A$ by congruence
transformations $SAS^T$ with nonsingular $S\in\F^{n\times n}$ if
$\Phi$ is bilinear, or by *congruence transformations $SAS^*$ with
nonsingular $S\in\F^{n\times n}$ if $\Phi$ is sesquilinear.

Each square matrix $M$ over $\F$ is congruent (resp., *congruent) to a direct sum
\begin{equation}\label{li5}
R\oplus J_{n_1}\oplus\dots\oplus J_{n_p}\, ,\qquad\text{with a
nonsingular $R$,}
\end{equation}
in which the matrix $R$ is uniquely determined by $M$  up to
congruence (resp., *congruence) and the $n_i$-by-$n_i$ singular
Jordan blocks $J_{n_i}$ are uniquely determined  up to permutation;
see Theorem \ref{t3}(a). Horn and Sergeichuk \cite{h-s_lin} called
the sum \eqref{li5} the \emph{regularizing decomposition} of $M$,
$J_{n_1},\dots, J_{n_p}$ the \emph{singular summands}, and the
matrix $R$ the \emph{regular part} of $M$. They gave an algorithm
for constructing \eqref{li5} by $M$.

We say that two matrices $A,B\in\F^{n\times n}$ are
\emph{topologically congruent} (resp., \emph{topologically
*congruent}) if the bilinear (resp., sesquilinear) spaces $(\F^n,A)$
and $(\F^n,B)$ are topologically equivalent.

The next theorem is the matricial analogue of Theorem \ref{jus}.

\begin{theorem}\label{jye}
Two square matrices over $\F\in\{\C,\R\}$ are topologically congruent $($resp., *congruent$)$  if and only if their regularizing decompositions coincide up to topological congruence $($resp., *congruence$)$ of their regular parts and permutations of direct summands.
\end{theorem}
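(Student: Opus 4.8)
The plan is to deduce Theorem~\ref{jye} from Theorem~\ref{jus} by translating the matrix language of regularizing decompositions into the space language of decompositions into a nonsingular summand and indecomposable singular summands. To set up this dictionary, recall that a matrix $A\in\F^{n\times n}$ determines the space $(\F^n,A)$, that congruence (resp.\ *congruence) of matrices is exactly isomorphism of the associated spaces, and that topological congruence (resp.\ *congruence) is, by definition, topological equivalence of those spaces. By Theorem~\ref{t3}(a) the regularizing decomposition $A\sim R\oplus J_{n_1}\oplus\dots\oplus J_{n_p}$, with $\sim$ denoting congruence or *congruence, yields an isomorphism $(\F^n,A)\cong(U_0,\Phi_0)\oplus(U_1,\Phi_1)\oplus\dots\oplus(U_p,\Phi_p)$ in which $(U_0,\Phi_0)$ is the nonsingular space of $R$ and each $(U_i,\Phi_i)$ is the space of the singular Jordan block $J_{n_i}$. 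The structural fact I would record here is that each such $(U_i,\Phi_i)$ is indecomposable and singular, so that the regularizing decomposition is precisely a decomposition of the kind required in the hypothesis of Theorem~\ref{jus}; moreover two singular Jordan blocks yield isomorphic spaces if and only if they have the same size, since congruence preserves the dimension of the underlying space.

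For the forward implication, suppose $A$ and $B$ are topologically congruent (resp.\ *congruent), so that their spaces are topologically equivalent, and write $B\sim R'\oplus J_{n'_1}\oplus\dots\oplus J_{n'_q}$. Feeding the two regularizing decompositions into Theorem~\ref{jus}, I obtain that the regular parts $(U_0,\Phi_0)$ and $(V_0,\Psi_0)$ are topologically equivalent---that is, $R$ and $R'$ are topologically congruent (resp.\ *congruent)---and that $p=q$ and, after reindexing, each singular summand of $A$ is isomorphic to the corresponding singular summand of $B$. By the last remark of the previous paragraph, isomorphism of these summands forces the corresponding Jordan blocks to coincide, so the families $J_{n_1},\dots,J_{n_p}$ and $J_{n'_1},\dots,J_{n'_q}$ agree up to permutation. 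Hence the regularizing decompositions of $A$ and $B$ coincide up to topological congruence (resp.\ *congruence) of their regular parts and permutation of their singular summands.

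For the converse, assume the regularizing decompositions coincide in this sense. Since congruent matrices give isomorphic, hence topologically equivalent, spaces, I may replace $A$ and $B$ by their regularizing decompositions and, identifying isomorphic singular parts, assume $A=R\oplus W$ and $B=R'\oplus W$, where $W=J_{n_1}\oplus\dots\oplus J_{n_p}$ is the common singular part and $R,R'$ are topologically congruent (resp.\ *congruent). Choosing a homeomorphism $\alpha$ realizing the topological equivalence of the regular spaces of $R$ and $R'$ and taking $\beta$ to be the identity on the common singular space of $W$, I note that a finite direct product of homeomorphisms is again a homeomorphism and that the forms split as direct sums with no cross terms; consequently $\alpha\times\beta$ is a homeomorphism intertwining the two forms, which exhibits $(\F^m,A)$ and $(\F^n,B)$ as topologically equivalent, i.e.\ $A$ and $B$ as topologically congruent (resp.\ *congruent). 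The equality $m=n$ of the ambient dimensions is guaranteed throughout by \eqref{svt}.

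The main obstacle I anticipate lies entirely in the dictionary step rather than in either implication: one must confirm that the singular part of the regularizing decomposition is genuinely a system of \emph{indecomposable} singular summands, so that Theorem~\ref{jus} applies to it verbatim, and that isomorphism of these summands reduces to equality of the block sizes. Granting these two facts, both belonging to the theory of regularizing decompositions summarized in Theorem~\ref{t3}, the two implications reduce to a routine translation together with the standard observation that direct sums of homeomorphisms are homeomorphisms.
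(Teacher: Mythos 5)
Your proposal is correct and matches the paper's (implicit) treatment: the paper states Theorem \ref{jye} only as the ``matricial analogue'' of Theorem \ref{jus} without a separate proof, and your dictionary between regularizing decompositions and decompositions into a nonsingular space plus indecomposable singular spaces, together with the easy converse via a direct product of homeomorphisms, is exactly the intended translation. The one ingredient not literally contained in Theorem \ref{t3} as quoted is the indecomposability of the form spaces of the singular Jordan blocks $J_{n_i}$, which you correctly flag; it is a standard fact going back to Gabriel \cite{gab} and underlies the uniqueness statement of Theorem \ref{t3}(a).
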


The regularizing decomposition \eqref{li5} is the first step towards
reducing a matrix to its canonical form under congruence and
*congruence. Canonical forms under congruence and *congruence over
any field $\F$ of characteristic not 2 were given by Sergeichuk
\cite{ser_izv} (see also \cite{h-s_con}) up to classification of
quadratic and Hermitian forms over finite extensions of $\F$. They
were latter simplified for the case of complex matrices by Horn and
Sergeichuk \cite{hor-ser}. An alternative proof that the canonical
matrices from \cite{hor-ser} are indeed canonical was given by Horn
and Sergeichuk \cite{hor-ser_can}. These authors gave the proof only
for nonsingular matrices, which was sufficient due to the uniqueness
of regularizing decomposition (Theorem \ref{t3}(a)).

\section{The regularizing algorithm}

In this section, we recall the regularizing algorithm for matrices
under congruence and *congruence, which was constructed by Horn and
Sergeichuk \cite{h-s_lin}. An analogous regularization algorithm for
matrix pencils was constructed by Van Dooren \cite{doo}.

Let $\mathbb F$ be any field  with a fixed involution $a\mapsto
\tilde{a}$, which can be the identity. We say that a form is
\emph{$^{\star}$\!sesquilinear} (we use a five-pointed star) if it
is sesquilinear with respect to this involution. The transformation
$A\mapsto SAS^{\star}$ of $A\in\F^{n\times n}$, in which
$S\in\F^{n\times n}$ is nonsingular and $S^{\star}:=\tilde S^T$, is
called the \emph{$^{\star}$\!congruence transformation}. We remark
that {$^{\star}$congruence transformations over $\F=\C$ are
*congruence transformations if the involution $a\mapsto \tilde{a}$
is the complex conjugation, and they are congruence transformations
if the involution is the identity.

We denote by $0_n$ the zero matrix of size $n\times n$, $n\ge 0$,
assuming that when $n=0$ we formally have an empty square matrix.

Let $A$ be a singular square matrix over $\mathbb
F$. We reduce it by
$^{\star}$congruence transformations
as follows:
\begin{align}\label{new1a}
A\,&\longmapsto\,SA=\begin{bmatrix}
A_1\\0
\end{bmatrix}\!\! \begin{matrix}
\\ \}{\scriptstyle m_1}
 \end{matrix}
\quad
\begin{matrix} \text{($S$ is nonsingular and
the rows of}\\ \text{$A_1$
are linearly independent)}
\end{matrix}
            \\ \label{new1b}
 &\longmapsto\,\begin{bmatrix} A_1\\0
\end{bmatrix}S^{\star}=\left[\begin{array}{c|c}
B&C\\\hline 0&0_{m_1}
\end{array}\right]\quad
\text{($S$ is the same and
$B$ is square)}
           \\ \label{new1c}
  &\longmapsto\, (S_1\oplus
I_{m_1})\left[\begin{array}{c|c}
B&C\\\hline 0&0_{m_1}
\end{array}\right](S_1^{\star}\oplus
I_{m_1})
 =\left[\begin{array}{cc|c}
D & E & C_1 \\
F&A_2&0\\\hline
\multicolumn{2}{c|}0 &
0_{m_1}
 \end{array}\right]
\!\!
 \begin{matrix}
\}{\scriptstyle m_2} \\ \\ \}{\scriptstyle m_1}
 \end{matrix}
\end{align}
in which $D$ and $A_2$ are square, $S_1$ is nonsingular, and the
rows of $C_1$ are linearly independent. The nonnegative integers
$m_1,m_2$ and the matrix $A_2$ are used in the following theorem.

\begin{theorem}[\cite{h-s_lin}] \label{t3}
Let $\mathbb F$ be a field  with involution, which can be the identity.

{\rm(a)} Each square matrix $A$ over $\mathbb F$ is
$^{\star}$\!congruent to a direct sum
\begin{equation*}\label{1.1}
R\oplus J_{n_1}\oplus\dots\oplus J_{n_p}\, ,\quad\text{with $R$
nonsingular,}
\end{equation*}
in which the matrix $R$ is determined by $A$  uniquely up to $^{\star}$\!congruence, and the
$n_i\times n_i$ singular Jordan
blocks $J_{n_i}$ are
determined uniquely up to permutation.

{\rm(b)} Given a singular
square matrix $A$ over $\mathbb F$.
Apply the reduction
\eqref{new1a}--\eqref{new1c} to
$A$ and get $m_1,m_2,A_2$. Apply this reduction to
$A_2$ and get $m_3,m_4,A_4$, and
so on until obtain a nonsingular
$A_{2t}$:
\begin{equation}\label{new2}
A \Longrightarrow
  \begin{cases}
    \quad A_2 \Longrightarrow
    \\ m_1,m_2
  \end{cases}
  \begin{matrix}
  \!\!\!\!\!\!
  \begin{cases}
    \quad A_4
    \Longrightarrow\cdots \Longrightarrow
    \\ m_3,m_4
  \end{cases}\\
  \phantom{A}
  \end{matrix}  \!\!\!\!\!
 \begin{matrix}
  \begin{cases}
    \text{nonsingular }A_{2t}
    \\ m_{2t-1},m_{2t}.
  \end{cases}\\
  \phantom{A}\\ \phantom{A}
  \end{matrix}
\end{equation}
Then $m_1\ge m_2\cdots\dots\ge m_{2t}$ and $A$ is
$^{\star}$\!congruent to
\begin{equation*}\label{eq9}
A_{2t}\oplus
J_{1}^{[m_{1}-m_{2}]}\oplus
J_{2}^{[m_{2}-m_{3}]} \oplus
\dots\oplus
J_{2t-1}^{[m_{2t-1}-m_{2t}]}\oplus
J_{2t}^{[m_{2t}]},
\end{equation*}
in which
$J_i^{[m]}:=J_i\oplus\dots\oplus
J_i$ $(m$ summands, in particular,
$J_i^{[0]}=0_0)$. Thus, $A_{2t}$ is the regular part of $A$.

{\rm(c)} If\/ $\mathbb F=\mathbb
C$ or $\R$, then the reduction
\eqref{new2} can be realized by
unitary or orthogonal transformations, respectively, which
improves the numerical stability
of the algorithm.
\end{theorem}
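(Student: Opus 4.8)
The plan is to prove part (b) first, since it yields the existence half of (a) constructively, then to establish the uniqueness asserted in (a), and finally to record the numerically stable version (c). Existence is essentially algorithmic, so I expect the real work to lie in the uniqueness of the regular part.

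For part (b) I would first check that \eqref{new1a}--\eqref{new1c} are genuine $^{\star}$congruences and read off the two integers. In \eqref{new1a} one chooses, by Gaussian elimination, a nonsingular $S$ gathering the nonzero (linearly independent) rows of $A$ on top, so $m_1=n-\operatorname{rank}A$; applying $S^{\star}$ on the right in \eqref{new1b} leaves the zero rows untouched and isolates the square block $B$; and in \eqref{new1c} the congruence by $S_1\oplus I_{m_1}$ is chosen so that the nonzero rows of $C$ become the linearly independent rows of $C_1$, whence $m_2=\operatorname{rank}C$ and, since $C_1$ is $m_2\times m_1$ of full row rank, automatically $m_1\ge m_2$. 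The heart of the existence argument is a relation between the singular structure of $A$ and that of the strictly smaller matrix $A_2$: using that $C_1$ has full row rank, one performs paired row/column operations (all $^{\star}$congruences) to bring the matrix to a block-staircase form in which $A_2$ appears as an inner block, and a direct comparison with \eqref{eq9} shows that the singular Jordan blocks of $A$ are exactly those of $A_2$ with every size increased by $2$, together with $J_1^{[m_1-m_2]}$ and $J_2^{[m_2-m_3]}$. Feeding in the decomposition of $A_2$ (the inductive hypothesis, applied to the smaller matrix produced in \eqref{new2}) then yields \eqref{eq9} verbatim; the cross-round inequalities such as $m_2\ge m_3$ I would obtain from the invariant description of the $m_i$ discussed next.

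For uniqueness in (a), I would show that the integers produced by the algorithm are $^{\star}$congruence invariants, so that the exponents $m_k-m_{k+1}$ in \eqref{eq9}, and hence the multiset of singular Jordan blocks, depend only on the $^{\star}$congruence class of $A$. The cleanest route is to re-express each partial sum $m_1+\dots+m_k$ as the rank (or corank) of a matrix built functorially from $A$ and $A^{\star}$, so that it is manifestly unchanged under $A\mapsto SAS^{\star}$ and independent of the choices of $S,S_1$; the non-increasing property of the $m_i$ then also drops out, since these quantities are the Weyr-type characteristics of the singular part (so $m_k$ is the number of singular Jordan blocks of size $\ge k$). Granting that the singular summands are determined, the remaining point is that the nonsingular summand $R$ is determined up to $^{\star}$congruence: if $A$ is $^{\star}$congruent both to $R\oplus N$ and to $R'\oplus N'$, with $R,R'$ nonsingular and $N,N'$ equal (up to permutation) direct sums of singular Jordan blocks, then $R$ and $R'$ are $^{\star}$congruent. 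I expect this cancellation to be the main obstacle; I would settle it by invoking a Krull--Schmidt theorem for $^{\star}$sesquilinear spaces, the summands $J_{n_i}$ being indecomposable and singular while $R$ has no singular indecomposable summand, so that cancelling the common singular part forces $R\sim R'$.

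Finally, part (c) follows by observing that every reduction in (b) is only a rank-revealing step, namely collecting the independent rows of $A$, and later of $C$, on top, and each such step can be carried out by a unitary matrix over $\C$ (a $QR$/Householder factorization) or by an orthogonal matrix over $\R$. Since a congruence $A\mapsto UAU^{\star}$ with $U$ unitary (resp.\ orthogonal) is a special $^{\star}$congruence, replacing the elimination matrices $S,S_1$ throughout \eqref{new1a}--\eqref{new1c} by such $U$ realizes the whole algorithm \eqref{new2} through unitary (resp.\ orthogonal) transformations, improving numerical stability without changing the output.
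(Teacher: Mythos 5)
First, a point of reference: the paper does not prove Theorem~\ref{t3} at all. It is quoted from Horn and Sergeichuk \cite{h-s_lin} and used as a black box in the proof of Theorem~\ref{jus}, so there is no in-paper argument to measure your proposal against; it has to be judged as a free-standing reconstruction of the proof in \cite{h-s_lin}.

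As such a reconstruction, your architecture is the standard and correct one: algorithmic existence by induction on one round of \eqref{new1a}--\eqref{new1c}, uniqueness via congruence invariants plus a cancellation argument, and part (c) by replacing Gaussian elimination with Householder/QR steps (which is fine, since all either elimination step must do is reveal a left null space, and that can be done with a unitary or orthogonal $S$). The computations you do carry out ($m_1=n-\operatorname{rank}A$, $m_2=\operatorname{rank}C$, $m_1\ge m_2$ because $C_1$ is $m_2\times m_1$ of full row rank) are correct. But the steps that carry all the difficulty are asserted rather than proved. (i) The claim that paired row/column operations bring the matrix to a staircase form from which one reads off that the singular blocks of $A$ are those of $A_2$ with sizes increased by $2$ together with $J_1^{[m_1-m_2]}\oplus J_2^{[m_2-m_3]}$ is precisely the main lemma of \cite{h-s_lin}; note also that $m_3$ is not defined by the first round, so the one-round statement must be phrased as a bordering operation on $A_2$, and one must separately prove $m_2\ge m_3$ (i.e.\ $\dim\Ker A_2\le m_2$), which you defer to an invariant description that only becomes available after the decomposition is established --- a mild circularity to untangle. (ii) For uniqueness you would need to actually exhibit the invariant rank expressions; be careful that ranks of arbitrary words in $A$ and $A^{\star}$ are \emph{not} preserved by $A\mapsto SAS^{\star}$ (e.g.\ $\operatorname{rank}(AA^{\star})$ is not), so the correct objects are block matrices such as $[A\;\,A^{\star}]$ and their nested analogues. (iii) The cancellation yielding $R\sim R'$ rests on a Krull--Schmidt theorem for $^{\star}$sesquilinear spaces over an arbitrary field with involution; this is true (Gabriel \cite{gab} for bilinear forms, Sergeichuk \cite{ser_izv} in general) but is itself a substantial theorem, not a routine invocation. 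In short: a sound plan whose hard kernel --- the staircase lemma and the cancellation theorem --- remains to be supplied.
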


\section{Proof of Theorem \ref{jus}}\label{ssw}

Theorem \ref{jus} is formulated for forms on $\C^n$ or $\R^n$, but
it is more convenient to prove it for forms on unitary or Euclidean
spaces since their subspaces are also unitary or Euclidean, respectively. A unitary space is also called a complex inner product space.
We consider unitary and Euclidean spaces as topological spaces.

Let $\F$ be $\C$ or $\R$, and let
\begin{equation*}\label{krb}
\Phi: U\times U\to \F,\qquad
\Phi': U'\times U'\to \F
\end{equation*}
be two bilinear or two sesquilinear forms on unitary spaces if
$\F=\C$, or two bilinear forms on Euclidean spaces if $\F=\R$. We
suppose that these forms are topologically equivalent, i.e., there
exists a homeomorphism $\varphi:U\to U'$ such that
\begin{equation}\label{kgr}
\Phi (x,y)=\Phi'(\varphi (x),\varphi (y))\, ,\qquad\text{for all
$x,y\in U$}.
\end{equation}

Let $A$ and $A'$ be matrices of $\Phi $ and $\Phi '$ in orthonormal
bases. Applying to  $A$ the reduction \eqref{new1a}--\eqref{new1c}
in which the transforming matrices are unitary if $F=\C$ or
orthogonal if $F=\R$, we obtain $m_1$, $m_2$ and $A_2$. Applying it
to $A'$, we obtain $m'_1$, $m'_2$, and $A'_2$. We need to prove that
\begin{equation}\label{yep}
m_1=m_1',\quad m_2=m_2',\quad \text{and  $A_2$ and $A_2'$ are
topologically $^{\star}$congruent}
\end{equation}
(that is, $A_2$ and $A_2'$ are topologically congruent if the forms
$\Phi $ and $\Phi '$ are bilinear; they are topologically *congruent
if the forms are sesquilinear).

Let $S$ be a unitary matrix if $\F=\C$ or an orthogonal matrix if $\F=\R$ such that $SA$ has the form given in \eqref{new1a}. Then
\begin{equation}\label{kie}
SAS^{\star}=\left[\begin{array}{c|c}
B&C\\\hline 0&0_{m_1}
\end{array}\right]\quad
\text{(the rows of $[B\,C]$
are linearly independent)}
\end{equation}
is the matrix of $\Phi$ in a new orthonormal basis.

The basis vectors that correspond to the second horizontal strip of \eqref{kie} generate the vector space
\begin{equation*}\label{jyw}
L:=\{x\in U\,|\,\Phi (x,U)=0\},
\end{equation*}
which is called the \emph{left kernel}, or the left radical, of
$\Phi $. Denote by $L'$ the left kernel of $\Phi '$. If $x\in L$,
then $\Phi' (\varphi (x),U')= \Phi (x,U)=0$ by \eqref{kgr}, hence
$\varphi(L)\subset L'$. The inclusion $\varphi(L)\supset L'$ holds
too since, for each $x'\in L'$ and setting $x:=\varphi ^{-1}(x')$ we
have
$$\Phi (x,U)=\Phi'(\varphi (x),\varphi(U))=\Phi'(x',U')=0\, ,$$ and so
$x\in L$. Thus,
\begin{equation}\label{lwp}
\varphi(L)=L',
\end{equation}
which proves the first equality in \eqref{yep} due to \eqref{svt}.

Let $S_1$ be a unitary matrix if $\F=\C$ or an orthogonal matrix if $\F=\R$ such that
\[
S_1C=\begin{bmatrix}
       C_1\\
       0\\
     \end{bmatrix}
\quad \text{(the rows of $C_1$ are linearly independent)}.
\]
Then
\begin{equation}\label{lke}
(S_1\oplus
I_{m_1})\left[\begin{array}{c|c}
B&C\\\hline 0&0_{m_1}
\end{array}\right](S_1^{\star}\oplus
I_{m_1})
 =\left[\begin{array}{cc|c}
D & E & C_1 \\
F&A_2&0\\\hline
\multicolumn{2}{c|}0 &
0_{m_1}
 \end{array}\right]
\!\!
 \begin{matrix}
\}{\scriptstyle m_2} \\ \\ \}{\scriptstyle m_1}
 \end{matrix}
\end{equation}
is the matrix of $\Phi $ in a new orthonormal basis.

Since the basis vectors that correspond to the columns of $C_1$ generate $L$, the basis vectors that correspond to the second and third horizontal strips of the right hand side matrix in \eqref{lke} generate the vector space
\begin{equation*}\label{der}
K:=\{x\in U\,|\,\Phi(x,L)=0\}.
\end{equation*}
Analogously, define $K':=\{x'\in U'\,|\,\Phi'(x',L')=0\}$.

By \eqref{kgr} and \eqref{lwp}, for each $x\in K$ we have
$$\Phi'(\varphi (x),L')=\Phi'(\varphi (x),\varphi (L))=\Phi(x,L)=0\,
,$$ and consequently $\varphi (K)\subset K'$. The inclusion
$\varphi(K)\supset K'$ holds too since for each $x'\in K'$ and
$x:=\varphi ^{-1}(x')$, we have $$\Phi (x,L)=\Phi'(\varphi
(x),\varphi(L))=\Phi'(x',L')=0\, ,$$ and so $x\in K$. Thus,
\begin{equation*}\label{kyt}
\varphi(K)=K'.
\end{equation*}
By \eqref{svt}, $$m_2=\dim U-\dim K=\dim U'-\dim K'=m_2'\, ,$$ which
proves the second equality in \eqref{yep}.

Since the basis in $U$ is orthonormal, the basis vectors that
correspond to the second horizontal strip of the right hand side
matrix in \eqref{lke} generate the vector space
\[
L^{\bot}_K:=\{x\in K\,|\,(x,L)=0\},
\]
which is the orthogonal complement of $L$ in $K$. Analogously, write
$L^{\prime\bot}_{K'}:=\{x'\in K'\,|\,(x',L')=0\}.$ Then $
K=L^{\bot}_{K}\oplus L$ and $K'=L^{\prime\bot}_{K'}\oplus L'. $

Define the maps that
are the compositions of three maps:
\begin{equation}\label{ka}
\begin{matrix}
\psi:&
L^{\bot}_K\a{\iota}L^{\bot}_{K}\oplus  L\a{\varphi }L^{\prime\bot}_{K'}\oplus L'\a{\pi' }L^{\prime\bot}_{K'}\\
\psi':&
L^{\prime\bot}_{K'}\a{\iota'}L'^{\bot}_{K'}\oplus L'\a{\varphi^{-1} }L^{\bot}_{K}\oplus  L\a{\pi }L^{\bot}_{K}
\end{matrix}
\end{equation}
where $\iota,\iota'$ are the injections and $\pi ,\pi '$ are the orthogonal projections.

\begin{lemma}\label{jje}
The map $\psi: L^{\bot}_{K}\to L^{\prime\bot}_{K'}$ is a homeomorphism and $\psi^{-1}=\psi'$.
\end{lemma}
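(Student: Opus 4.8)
The plan is to separate the purely topological content from the one genuinely non-obvious point. Continuity of $\psi$ and $\psi'$ is immediate: each is a composite of an inclusion, a restriction of $\varphi$ (respectively $\varphi^{-1}$), and an orthogonal projection, exactly as displayed in \eqref{ka}. Since $\varphi(K)=K'$ has already been established, $\varphi$ restricts to a homeomorphism $K\to K'$, and orthogonal projections onto subspaces of a finite-dimensional inner product space are continuous; hence both $\psi$ and $\psi'$ are continuous. So the whole lemma reduces to showing that $\psi$ and $\psi'$ are mutually inverse bijections, i.e. $\psi'\circ\psi=\mathrm{id}_{L^{\bot}_K}$ and $\psi\circ\psi'=\mathrm{id}_{L^{\prime\bot}_{K'}}$.

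The key step, and the one place where the nonlinearity of $\varphi$ must be tamed, is a coset-preservation property: $\varphi(x+L)=\varphi(x)+L'$ for every $x\in U$. First I would prove the inclusion $\subseteq$. For $\ell\in L$ and arbitrary $y\in U$, relation \eqref{kgr} gives $\Phi'(\varphi(x+\ell),\varphi(y))=\Phi(x+\ell,y)=\Phi(x,y)=\Phi'(\varphi(x),\varphi(y))$, where $\Phi(\ell,y)=0$ because $\ell$ lies in the left kernel. Since $\varphi$ is surjective, $\varphi(y)$ ranges over all of $U'$, so $\Phi'(\varphi(x+\ell)-\varphi(x),\,z)=0$ for every $z\in U'$ by additivity of $\Phi'$ in its first argument; therefore $\varphi(x+\ell)-\varphi(x)$ belongs to the left kernel $L'$. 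Applying the same argument to $\varphi^{-1}$, which realizes the topological equivalence of $\Phi'$ with $\Phi$, yields the reverse inclusion, and hence the claimed equality. I expect this to be the main obstacle, since it is precisely the assertion that $\varphi$ behaves like a linear map modulo the left kernels even though it is only a homeomorphism; everything afterwards is bookkeeping.

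With coset preservation in hand, the conclusion is a short computation. Take $a\in L^{\bot}_K$ and set $b':=\psi(a)=\pi'(\varphi(a))\in L^{\prime\bot}_{K'}$, so that $\varphi(a)\in b'+L'$ and therefore $\varphi(a+L)=\varphi(a)+L'=b'+L'$. In particular $b'=\varphi(a+\ell_0)$ for some $\ell_0\in L$, whence $\varphi^{-1}(b')=a+\ell_0\in K$; since $\pi$ is the orthogonal projection of $K=L^{\bot}_K\oplus L$ onto $L^{\bot}_K$, it annihilates $\ell_0$, giving $\psi'(b')=\pi(a+\ell_0)=a$. Thus $\psi'\circ\psi=\mathrm{id}_{L^{\bot}_K}$, and the symmetric computation, interchanging the roles of $\varphi$ and $\varphi^{-1}$, of $L$ and $L'$, and of $\pi$ and $\pi'$, gives $\psi\circ\psi'=\mathrm{id}_{L^{\prime\bot}_{K'}}$. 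Hence $\psi$ is a continuous bijection whose inverse is the continuous map $\psi'$, i.e. a homeomorphism with $\psi^{-1}=\psi'$, as required.
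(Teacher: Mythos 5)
Your proof is correct and follows essentially the same route as the paper: the same key coset-preservation identity $\varphi(x+L)=\varphi(x)+L'$ derived from \eqref{kgr} and the surjectivity of $\varphi$, followed by the same bookkeeping to show $\psi'\circ\psi$ and $\psi\circ\psi'$ are identities, with continuity coming from the continuity of the constituent maps. The only cosmetic difference is that you obtain the reverse inclusion in the coset identity by symmetry via $\varphi^{-1}$, where the paper deduces it directly from surjectivity of $\varphi$.
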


\begin{proof}
By \eqref{kgr}, for all $x,y\in U$
\[
\Phi'(\varphi (x),\varphi (y))=\Phi (x,y)=\Phi (x+L,y)=\Phi '(\varphi (x+L),\varphi (y)),
\]
hence
$
\Phi'(\varphi (x+L)-\varphi (x),\varphi (y))=0.
$
Since $\varphi $ is a surjection,
each element of $U'$ is represented in the form $\varphi (y)$, and so $\varphi (x+L)-\varphi (x)\subset L'$. Thus,
$\varphi (x+L)\subset \varphi (x)+L'$, which implies
\begin{equation}\label{lke!}
 \varphi (x+L)=\varphi (x)+L',\qquad\text{for all $x\in U$}
\end{equation}
because $\varphi $ is a surjection.

Let $x\in L^{\bot}_{K}$ and $\varphi(x)=x'+l'$, where $x'\in
L^{\prime\bot}_{K'}$ and $l'\in L'$. By \eqref{lke!}, $\varphi
(x+L)=x'+l'+L'=x'+L'$, and so there exists $l\in L$ such that
$\varphi (x+l)=x'$. By \eqref{ka},
\begin{align*}
\psi&:\ x\arr{\iota}x+0\arr{\varphi}x'+l'\arr{\pi'}x'\\ \nonumber
\psi'&:\ x'
\arr{\iota'}x'+0\arr{\varphi^{-1}}x+l\arr{\pi}x.
\end{align*}
Hence $\psi'\psi =1$. Analogously, $\psi \psi' =1$, and so
$\psi^{-1}=\psi'$. Since
$\iota,\iota',\pi,\pi',\varphi,\varphi^{-1}$ are continuous, $\psi$
and $\psi'$ are continuous too, which proves that $\psi$ is a
homeomorphism.
\end{proof}

For all $x,y\in L^{\bot}_{K}$, we write $\varphi (x)=\psi(x)+l'$ and
$\varphi (y)=\psi(y)+l''$, in which $l',l''\in L'$. It follows from
the zeros in the matrix \eqref{lke} that
\begin{equation}\label{dlt}
\begin{split}
  \Phi(x,y) &= \Phi'(\varphi (x),\varphi (y)) \\
   &= \Phi'(\psi(x)+l',\psi (y)+l'') \\
   &= \Phi'(\psi(x),\psi(y))+\Phi'(\psi(x),l'')+\Phi'(l',\psi(y))+
   \Phi'(l',l'') \\
   &= \Phi'(\psi(x),\psi(y)).
\end{split}
\end{equation}

Let
\begin{equation*}\label{dr5}
\Phi_2: L^{\bot}_{K}\times L^{\bot}_{K}\to \F,\qquad
\Phi'_2: L^{\prime\bot}_{K'}\times L^{\prime\bot}_{K'}\to \F
\end{equation*}
be the restrictions of $\Phi$ and $\Phi'$. By Lemma \ref{jje} and
\eqref{dlt}, these forms are topologically equivalent. Moreover,
$A_2$ and $A_2'$ are their matrices in the orthonormal bases. We
have proved \eqref{yep}.

In the same way, we apply to $A_2$ and $A_2'$ the reduction \eqref{new1a}--\eqref{new1c} in which the transforming matrices  are unitary if $\F=\C$ or  orthogonal if $\F=\R$. We obtain the numbers $m_3=m_3',
m_4=m_4'$ and topologically equivalent forms $\Phi_4,\Phi'_4$ with matrices $A_4,A_4'$
in orthonormal bases (see \eqref{new2}). We repeat this reduction until obtain topologically equivalent forms $\Phi_{2t},\Phi'_{2t}$ with nonsingular matrices $A_{2t},A_{2t}'$.

By Theorem \ref{t3}(b), there exist bases of the spaces $U$ and $U'$, in which the matrices of $\Phi$ and $\Phi'$ have the form
\begin{align*}
&A_{2t}\oplus
J_{1}^{[m_{1}-m_{2}]}\oplus
J_{2}^{[m_{2}-m_{3}]} \oplus
\dots\oplus
J_{2t-1}^{[m_{2t-1}-m_{2t}]}\oplus
J_{2t}^{[m_{2t}]}
        \\
&A'_{2t}\oplus
J_{1}^{[m_{1}-m_{2}]}\oplus
J_{2}^{[m_{2}-m_{3}]} \oplus
\dots\oplus
J_{2t-1}^{[m_{2t-1}-m_{2t}]}\oplus
J_{2t}^{[m_{2t}]}
\end{align*}
which completes the proof of Theorem \ref{jus}.

\end{document}